\newtheorem{prop}{Proposition}
\newtheorem{hip}{Conjecture}
\newtheorem{tw}{Theorem}
\newtheorem{pyt}{Question}
\theoremstyle{remark}
\newtheorem*{Convention}{Convention}
\title{A note on $p$-adic valuations of the Schenker sums}
\author{Piotr Miska}
\keywords{$p$-adic valuation, prime, Schenker sum} \subjclass[2010]{11B50, 11B83, 11L99}
\begin{document}

\setlength{\parindent}{10mm}
\maketitle

\begin{abstract}
A prime number $p$ is called a Schenker prime if there exists such $n\in\mathbb{N}_+$ that $p\nmid n$ and $p\mid a_n$, where $a_n = \sum_{j=0}^{n}\frac{n!}{j!}n^j$ is so-called Schenker sum. T. Amdeberhan, D. Callan and V. Moll formulated two conjectures concerning $p$-adic valuations of $a_n$ in case when $p$ is a Schenker prime. In particular, they asked whether for each $k\in\mathbb{N}_+$ there exists the unique positive integer $n_k<p^k$ such that $v_p(a_{m\cdot 5^k + n_k})\geq k$ for each nonnegative integer $m$. We prove that for every $k\in\mathbb{N}_+$ the inequality $v_5(a_n)\geq k$ has exactly one solution modulo $5^k$. This confirms the first conjecture stated by the mentioned authors. Moreover, we show that if $37\nmid n$ then $v_{37}(a_n)\leq 1$, what means that the second conjecture stated by the mentioned authors is not true.
\end{abstract}

\section{Introduction}

Questions concerning the behaviour of $p$-adic valuations of elements of integer sequences are interesting subjects of research in number theory. The knowledge of all $p$-adic valuations of a given number is equivalent to its factorization. Papers \cite{1}, \cite{2}, \cite{3}, \cite{5} and \cite{6} present interesting results concerning behaviour of $p$-adic valuations in some integer sequences.

Fix a prime number $p$. Every nonzero rational number $x$ can be written in the form $x=\frac{a}{b}p^t$, where $a\in\mathbb{Z}$, $b\in\mathbb{N}_+$, $\gcd(a,b)=1$ and $p\nmid ab$. Such a representation of $x$ is unique, thus the number $t$ is well defined. We call $t$ the $p$-adic valuation of the number $x$ and denote it by $v_p(x)$. By convention, $v_p(0)=+\infty$. In particular, if $x\in\mathbb{Q}\setminus\lbrace 0\rbrace$ then $\vert x\vert=\prod_{p \mbox{\scriptsize{ prime}}}p^{v_p(x)}$, where $v_p(x)\neq 0$ for finitely many prime numbers $p$. In the sequel by $s_d(n)$ we denote the sum of digits of positive integer $n$ in base $d$, i.e. if $n=\sum_{i=0}^m c_id^i$ is an expansion of $n$ in base $d$, then $s_d(n)=\sum_{i=0}^m c_i$.

In a recent paper T. Amdeberhan, D. Callan and V. Moll introduced the sequence of Schenker sums, which are defined in the following way:
\begin{equation*}
a_n = \sum_{j=0}^{n}\frac{n!}{j!}n^j, n\in\mathbb{N_+}.
\end{equation*}
The authors of \cite{1} obtained exact expression for 2-adic valuation of Schenker sums:
\begin{equation*}
v_2(a_n) = \begin{cases}
1, & \mbox{ when } 2\nmid n
\\n-s_2(n), & \mbox{ when }  2\mid n
\end{cases}.
\end{equation*}
Moreover, they proved two results concerning $p$-adic valuation of elements of sequence $ (a_n)_{n\in\mathbb{N}_+} $ when $p$ is an odd prime number:

\begin{prop}[Proposition 3.1 in \cite{1}]\label{prop1}
Let $p$ be an odd prime number and $n=pm$ for some $m\in\mathbb{N}$. Then:
\begin{equation*}
v_p(a_n)=\frac{n-s_p(n)}{p-1}=v_p(n!).
\end{equation*}
\end{prop}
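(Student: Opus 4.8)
The plan is to exploit the non-Archimedean (ultrametric) nature of the $p$-adic valuation: if one summand in a finite sum has strictly smaller valuation than all the others, then the valuation of the sum equals that minimal valuation. So I would first write $a_n = \sum_{j=0}^{n} b_j$ with $b_j = \frac{n!}{j!}n^j$, noting that each $b_j = n(n-1)\cdots(j+1)\cdot n^j$ is an integer and that the $j=0$ term is simply $b_0 = n!$.

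Next I would compute the valuation of each summand. Since $\frac{n!}{j!}$ contributes $v_p(n!) - v_p(j!)$ and $n^j$ contributes $j\,v_p(n)$, we get
\[
v_p(b_j) = v_p(n!) - v_p(j!) + j\,v_p(n).
\]
In particular $v_p(b_0) = v_p(n!)$. I would then claim that $b_0$ is the unique summand of minimal valuation, that is, $v_p(b_j) > v_p(n!)$ for every $j\geq 1$; by the displayed formula this is equivalent to the inequality $j\,v_p(n) > v_p(j!)$.

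The key estimate is where the hypotheses enter. Because $p\mid n$ we have $v_p(n)\geq 1$, so $j\,v_p(n)\geq j$. On the other hand, Legendre's formula gives $v_p(j!) = \frac{j - s_p(j)}{p-1}$, and since $s_p(j)\geq 1$ for $j\geq 1$ and $p$ is odd (so $p-1\geq 2$), we obtain $v_p(j!)\leq \frac{j-1}{p-1}\leq \frac{j-1}{2} < j \leq j\,v_p(n)$. This settles the claim, and the ultrametric inequality then forces $v_p(a_n) = v_p(b_0) = v_p(n!)$. The remaining equality $v_p(n!) = \frac{n - s_p(n)}{p-1}$ is again Legendre's formula, now applied to $n$ itself.

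I do not expect a serious obstacle here; the whole argument hinges on the single inequality $v_p(j!) < j\,v_p(n)$, which is exactly the place where the oddness of $p$ (equivalently $p-1\geq 2$) and the divisibility $p\mid n$ are used. The only point demanding a little care is checking that the minimal-valuation term is \emph{strictly} dominant rather than merely tied with some other term, so that the ultrametric equality—and not just an inequality—can legitimately be invoked.
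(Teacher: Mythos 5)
Your argument is correct: each summand $b_j=\frac{n!}{j!}n^j$ is an integer, $v_p(b_j)=v_p(n!)-v_p(j!)+j\,v_p(n)$, and the inequality $v_p(j!)=\frac{j-s_p(j)}{p-1}\leq j-1<j\leq j\,v_p(n)$ for $j\geq 1$ shows the $j=0$ term is the strict minimum, so the ultrametric equality applies. Note that this paper only quotes the proposition from \cite{1} and gives no proof of its own, so there is nothing internal to compare against; your proof is the standard one. One small inaccuracy in your commentary: the oddness of $p$ is not actually needed anywhere, since $\frac{j-1}{p-1}\leq j-1<j$ already holds for $p=2$ --- consistent with the formula $v_2(a_n)=n-s_2(n)=v_2(n!)$ for even $n$ displayed in the introduction; what the hypothesis $p\mid n$ buys is $v_p(n)\geq 1$, and that is the only place a hypothesis is genuinely used.
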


\begin{prop}[Proposition 3.2 in \cite{1}]\label{prop2}
Let $p$ be an odd prime number and ${n = pm+r}$, where $0<r<p$. Then $p\mid{a_n}$ if and only if $p\mid{a_r}$.
\end{prop}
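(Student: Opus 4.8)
The plan is to compute $a_n$ modulo $p$ directly and show that it equals a unit times $a_r$, so that the two quantities vanish modulo $p$ simultaneously. First I would record two elementary consequences of the hypothesis $n = pm + r$ with $0 < r < p$: namely $n \equiv r \pmod{p}$ and $p \nmid n$, so that $n$ (and $r$) are invertible modulo $p$.

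The key step is to identify which summands of $a_n = \sum_{j=0}^{n} \frac{n!}{j!}\, n^j$ survive modulo $p$. Writing $\frac{n!}{j!} = \prod_{i=j+1}^{n} i$, this product contains the factor $pm = n-r$ precisely when $j+1 \leq pm$, that is, when $j < n-r$, and for every such $j$ the summand is divisible by $p$. Since $0 < r < p$, the largest multiple of $p$ not exceeding $n$ is exactly $pm = n-r$, while the next one, $p(m+1) = n-r+p$, exceeds $n$; hence for $j \geq n-r$ the range $\{j+1,\dots,n\}$ contains no multiple of $p$ and $\frac{n!}{j!}$ is a unit modulo $p$. Therefore
\[
a_n \equiv \sum_{j=n-r}^{n} \frac{n!}{j!}\, n^j \pmod{p}.
\]

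I would then reindex by $s = n-j$, so that $s$ runs from $0$ to $r$. Using $n \equiv r \pmod{p}$ and $s \leq r$, the falling factorial reduces as $\frac{n!}{(n-s)!} = \prod_{i=0}^{s-1}(n-i) \equiv \prod_{i=0}^{s-1}(r-i) = \frac{r!}{(r-s)!} \pmod{p}$, while $n^{\,n-s} \equiv r^{\,n-s} = r^{pm}\, r^{\,r-s} \pmod{p}$ because $n-s = pm + (r-s)$. Factoring out the constant $r^{pm}$ and substituting $t = r-s$ gives
\[
a_n \equiv r^{pm} \sum_{t=0}^{r} \frac{r!}{t!}\, r^{t} = r^{pm}\, a_r \pmod{p}.
\]
Since $p \nmid r$, the factor $r^{pm}$ is invertible modulo $p$, and consequently $p \mid a_n$ if and only if $p \mid a_r$, as claimed.

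The main obstacle, and the one place genuine care is needed, is the vanishing analysis in the second paragraph: one must verify that the factor $pm$ enters $\frac{n!}{j!}$ exactly for the indices $j < n-r$ and that no further multiple of $p$ intrudes into the surviving range $\{n-r+1,\dots,n\}$. Once this is pinned down, the remainder is a routine reindexing together with the observation that $r^{pm}$ is a unit modulo $p$.
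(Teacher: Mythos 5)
Your proof is correct and follows essentially the same route as the paper: rewrite the summands as $n^{n-s}\prod_{i=0}^{s-1}(n-i)$, discard the terms whose falling factorial picks up the multiple $pm$ of $p$, and reduce the survivors modulo $p$ to recover a unit times $a_r$. The paper packages this for a general modulus $d$ coprime to $n$ via the polynomial $f_d$ (using that any $d$ consecutive integers contain a multiple of $d$), but the underlying mechanism is identical to yours.
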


These propositions allow us to compute $p$-adic valuation of $a_n$, when $p\mid n$ or $p\nmid a_{n\mbox{\scriptsize{ mod }} p}$. This gives complete description of $p$-adic valuation of numbers $a_n$ for some prime numbers ($3, 7, 11, 17$, for example). However, the question concerning $p$-adic valuation in case, when $p\nmid{n}$ and $p\mid{a_n}$ for some postive integer $n$ is much more difficult. The first prime $p$ such that $p\nmid n$ and $p\mid a_n$ for some $n\in\mathbb{N_+}$ is $p=5$. We have $5\mid{a_{5m+2}}$ for every $m\in\mathbb{N}$. Let us observe that if $n\not\equiv 0,2 \pmod {5}$, then $5\nmid{a_n}$. According to numerical experiments, the authors of the paper [1] formulated a conjecture,  equivalent version of which is as follows:

\begin{hip}[Conjecture 4.6 in \cite{1}]\label{conj1}
Assume that $n_k$ is the unique natural number less than $5^k$ such that $5^k\mid{a_{m\cdot5^k+n_k}}, m\in\mathbb{N}$. Then there exists the unique number $n_{k+1}\in\{n_k, 5^k+n_k, 2\cdot5^k+n_k, 3\cdot5^k+n_k, 4\cdot5^k+n_k\}$ such that ${5^{k+1}\mid{a_{m\cdot5^{k+1}+n_{k+1}}}}$, $m\in\mathbb{N}$. In other words, for every $k\in\mathbb{N}$ the inequality $v_5(a_n)\geq{k}$ has the unique solution $n \pmod{5^k}$ with $5\nmid n$.
\end{hip}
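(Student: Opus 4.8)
The plan is to argue by induction on $k$, the base case $k=1$ being the recorded fact that $5\mid a_n$ exactly for $n\equiv 2\pmod 5$, so $n_1=2$. The starting point is a reformulation that strips off the powers of $n$, which are $5$-adic units since $5\nmid n$. Reindexing the Schenker sum by $i=n-j$ gives $a_n=\sum_{i=0}^n n^{n-i}\,n^{\underline i}=n^n\sum_{i=0}^n p_i(1/n)$, where $n^{\underline i}=n(n-1)\cdots(n-i+1)$ and $p_i(x)=\prod_{\ell=0}^{i-1}(1-\ell x)\in\mathbb Z[x]$. Hence $v_5(a_n)=v_5(A_n)$ with $A_n:=\sum_{i=0}^n p_i(1/n)$, a $5$-adic integer, and I would work entirely with $A_n$.

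The first key step is to replace the $n$-dependent sum $A_n$ by a fixed polynomial modulo $5^{k+1}$. Two observations do this: first, $p_i(1/n)=0$ whenever $i>n$, as the factor with $\ell=n$ vanishes; second, for a unit $x=1/n$ the factor $1-\ell x$ is divisible by $5$ precisely when $\ell\equiv n\pmod 5$, so any five consecutive values of $\ell$ contribute a factor divisible by $5$ and $v_5(p_i(1/n))\ge\lfloor i/5\rfloor$. Taking $N=5(k+1)$ one obtains $A_n\equiv P(1/n)\pmod{5^{k+1}}$ for every $n$ coprime to $5$, where $P:=\sum_{i=0}^{N-1}p_i\in\mathbb Z[x]$. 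In particular the condition $v_5(a_n)\ge k+1$ depends only on $n\bmod 5^{k+1}$, so by the inductive hypothesis that $v_5(a_n)\ge k$ holds exactly on the class $n_k\bmod 5^k$, the solutions of $v_5(a_n)\ge k+1$ lie among the five lifts $n_k+t5^k$, $t\in\{0,1,2,3,4\}$.

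The second step is to linearise $P$ along these lifts. Since $1/(n_k+t5^k)\equiv 1/n_k-t5^k n_k^{-2}\pmod{5^{2k}}$, and the Taylor expansion of $P$ about $1/n_k$ has $5$-integral coefficients (as $P\in\mathbb Z[x]$ and $1/n_k\in\mathbb Z_5$), all terms of order $\ge 2$ in $t5^k$ carry valuation $\ge 2k\ge k+1$; thus
\begin{equation*}
A_{n_k+t5^k}\equiv A_{n_k}-t\,5^k n_k^{-2}P'(1/n_k)\pmod{5^{k+1}}.
\end{equation*}
Writing $A_{n_k}\equiv 5^k w\pmod{5^{k+1}}$ (legitimate since $v_5(A_{n_k})\ge k$), the inequality $v_5(a_{n_k+t5^k})\ge k+1$ becomes the single linear congruence $w\equiv t\,n_k^{-2}P'(1/n_k)\pmod 5$, which has a unique solution $t\pmod 5$ precisely when $P'(1/n_k)$ is a unit modulo $5$.

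Everything therefore reduces to the one genuinely arithmetic point, which I expect to be the main obstacle: proving $P'(1/n_k)\not\equiv 0\pmod 5$. All the $n_k$ are mutually congruent modulo $5$ (each lift fixes the previous residue), so $n_k\equiv 2$, whence $1/n_k\equiv 3\pmod 5$, and since $P'\in\mathbb Z[x]$ one has $P'(1/n_k)\equiv P'(3)\pmod 5$ independently of $k$. Reducing $p_i'(3)$ modulo $5$, the factor $1-3\ell$ vanishes exactly for $\ell\equiv 2\pmod 5$, so $p_i'(3)\equiv 0\pmod 5$ as soon as the range $0\le\ell\le i-1$ contains two such $\ell$, since differentiation removes only one vanishing factor. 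Hence $P'(3)\equiv\sum_{i=1}^{7}p_i'(3)\pmod 5$, a finite sum that evaluates to a nonzero residue (in fact $\equiv 1$). This non-vanishing, being uniform in $k$, makes the linear congruence nondegenerate at every stage and closes the induction.
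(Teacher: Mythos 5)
Your argument is correct, and I verified the one numerical input it rests on: with $p_i(x)=\prod_{\ell=0}^{i-1}(1-\ell x)$ one has $p_i'(3)\equiv 4,4,3,2,2,1\pmod 5$ for $i=2,\dots,7$ and $p_i'(3)\equiv 0\pmod 5$ for $i\geq 8$, so indeed $P'(3)\equiv 16\equiv 1\pmod 5$. Structurally you follow the same strategy as the paper --- reduce $a_n$ to a polynomial congruence, lift solutions Hensel-style, and hinge everything on a derivative being a unit at the residue $2$ --- but the execution differs at each stage. The paper works with the polynomial $f_{p^k}(X)=\sum_{j=0}^{p^k-1}X^{p^k-j-2}\prod_{i=0}^{j-1}(X-i)$ evaluated at $X=n$, truncating via divisibility of products of $p^k$ consecutive integers, and then invokes the standard lifting theorem for polynomial congruences; you instead factor out the unit $n^n$, substitute $x=1/n$, truncate via the sharper bound $v_5(p_i(1/n))\geq\lfloor i/5\rfloor$ (which lets you fix the degree at $N=5(k+1)$ rather than $5^{k+1}$), and carry out the lifting by hand with a Taylor expansion. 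The most substantive divergence is the verification of nondegeneracy: the paper converts the condition $f'_{p^2}(n)\not\equiv 0\pmod p$ into the criterion $q_{n,p}=a_{n+p}-a_n(n+p)^{n+2}n^{p-n-2}\not\equiv 0\pmod{p^2}$ and checks $q_{2,5}=a_7-a_2\cdot 7^4\cdot 2\equiv 15\pmod{25}$, whereas you evaluate the derivative of the truncated polynomial directly as a six-term sum modulo $5$. The paper's route buys a reusable criterion expressed purely in terms of two Schenker sums (which it then applies to $p=13$ and $p=37$); your route is more self-contained and avoids computing $a_7$, at the cost of being tailored to the single prime $5$. One small presentational point: you should make explicit that the induction step requires $2k\geq k+1$, i.e.\ $k\geq 1$, which is why the base case $k=1$ must be supplied separately by the known classification of $n$ with $5\mid a_n$.
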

Number $5$ is not the unique prime number $p$ such that $p\nmid n$ and $p\mid{a_n}$ for some $n\in\mathbb{N_+}$. The prime numbers which satisfy the condition above are called \emph{Schenker primes}. 

The first question which comes to mind is: what is the cardinality of the set of Schenker primes? We will prove the following proposition using modification of the Euclid's proof of infinitude of set of prime numbers:

\begin{prop}
There are infinitely many Schenker primes.
\end{prop}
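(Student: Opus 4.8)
The plan is to run a Euclid-style argument, but driven by size rather than by coprimality alone: assuming that only finitely many Schenker primes exist, I would build a single large integer $n$ whose Schenker sum $a_n$ is forced into a factorization so constrained that $a_n$ becomes smaller than it is allowed to be. First I would record two elementary facts. Since the terms are nonnegative and include the $j=0$ term $n!$ and the $j=n$ term $n^n$, one has $a_n \geq n! + n^n > n!$, so $a_n$ is genuinely large. Also $2$ is a Schenker prime, because for odd $n$ we have $2\nmid n$ and, by the quoted $2$-adic formula, $v_2(a_n)=1$. Hence in any complete list $2=p_1<p_2<\dots<p_s$ of Schenker primes the primes $p_2,\dots,p_s$ are odd.

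Next, using the Chinese Remainder Theorem I would choose a large odd $n$ with $n\equiv 1\pmod{p_i}$ for every odd Schenker prime $p_i$; concretely $n\equiv 1\pmod{2p_2\cdots p_s}$. For each such $p_i$, Proposition \ref{prop2} gives $p_i\mid a_n \iff p_i\mid a_1=2$, which is false, so no odd Schenker prime divides $a_n$. Consequently every odd prime factor $q$ of $a_n$ must divide $n$: if instead $q\nmid n$, then $q\mid a_n$ with $q\nmid n$ would exhibit $q$ as a Schenker prime distinct from $2$, contradicting the previous sentence.

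Now I would pin down the multiplicities to produce the contradiction. Since $n$ is odd, the $2$-adic formula gives $v_2(a_n)=1$, and for each odd prime $q\mid n$ Proposition \ref{prop1} gives $v_q(a_n)=v_q(n!)$. Combining this with the previous step yields $a_n = 2\prod_{q\mid n,\,q\text{ odd}} q^{v_q(n!)} \le 2\prod_{q\le n,\,q\text{ odd}} q^{v_q(n!)} = 2\,n!/2^{v_2(n!)} \le n!$, the last inequality using $v_2(n!)\ge 1$ for $n\ge 2$. This contradicts $a_n>n!$, so the set of Schenker primes cannot be finite.

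The main obstacle, and the reason a bare Euclid argument does not suffice, is that $a_n$ must be controlled on two fronts at once: \emph{which} primes occur and \emph{to what power}. The congruence choice feeding Proposition \ref{prop2} is what removes all odd Schenker primes from the factorization of $a_n$, while Proposition \ref{prop1} is what converts the surviving, necessarily $n$-dividing, prime powers into the exact exponents appearing in $n!$; the unavoidable factor of $2$ is the only loose end, and it is tamed precisely by the closed form $v_2(a_n)=1$ for odd $n$. I expect the only delicate bookkeeping to be ensuring $n$ is taken large enough that $2^{1-v_2(n!)}\le 1$, which is automatic once $n\ge 3$, so that the final chain of inequalities collapses to $a_n\le n!$.
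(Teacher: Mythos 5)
Your proof is correct and follows essentially the same Euclid-style argument as the paper: choose $n\equiv 1\pmod{p}$ for every odd Schenker prime $p$ so that Proposition \ref{prop2} (together with $a_1=2$) excludes all of them from the factorization of $a_n$, then use Proposition \ref{prop1} to bound $a_n$ by $n!$ and contradict $a_n>n!$. The only cosmetic difference is that the paper takes $t=p_1\cdots p_s+1$, which is even, so the prime $2$ is handled by the $2\mid n$ branch of the $2$-adic formula and one gets $a_t=\prod_{p\mid t}p^{v_p(t!)}\leq t!$ at once, whereas you take $n$ odd and dispose of the factor $2$ separately via $v_2(a_n)=1$; both variants close the argument.
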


\begin{proof}
Assume that there are only finitely many Schenker primes and let $p_1, p_2, \dots, p_s$ be the odd Schenker primes in ascending order. Since $a_1=2$, we thus obtain by proposition \ref{prop2} that $p_1, p_2, p_3, \dots, p_s \nmid a_{p_1p_2p_3...p_s+1}$. Let us put $t := p_1p_2p_3\dots p_s+1$ and note that it is an even number. By Proposition \ref{prop1} we have:
\begin{equation*}
2t! \leq t! \sum_{j=0}^{t} \frac{t^j}{j!} = a_{t} = \prod_{p \mbox{\scriptsize{ prime, }} p\mid t} p^{v_p(t!)} \leq t!
\end{equation*}
and it leads to contradiction.
\end{proof}

The main result of this paper is contained in the following theorem:
\begin{tw}\label{tw1}
Let $p$ be a prime number, let $n_k\in\mathbb{N}$ be such that $p\nmid{n_k}$, $p^k\mid{a_{n_k}}$ and
\begin{equation*}
q_{n_k,p} := a_{n_k+p}-a_{n_k}(n_k+p)^{n_k+2}n_k^{p-n_k-2}.
\end{equation*}
Then:
\begin{itemize}
\item if $q_{n_k,p}\not\equiv 0 \pmod{p^2}$, then there exists unique $n_{k+1}$ modulo $p^{k+1}$ for which $p^{k+1}\mid{a_{n_{k+1}}}$ and $n_{k+1}\equiv n_k \pmod{p^k}$;
\item if $q_{n_k,p}\equiv 0 \pmod{p^2}$ and $p^{k+1}\mid{a_{n_k}}$, then $p^{k+1}\mid{a_{n_{k+1}}}$ for any $n_{k+1}$ satisfying $n_{k+1}\equiv n_k \pmod{p^k}$;
\item if $q_{n_k,p}\equiv 0 \pmod{p^2}$ and $p^{k+1}\nmid{a_{n_k}}$, then $p^{k+1}\nmid{a_{n_{k+1}}}$ for any $n_{k+1}$ satisfying $n_{k+1}\equiv n_k \pmod{p^k}$.
\end{itemize}

Moreover, if $p\nmid{n_1}$, $p\mid{a_{n_1}}$ and $q_{n_1,p}\not\equiv 0 \pmod{p^2}$, then for any $k\in\mathbb{N}_+$ the inequality $v_p(a_n)\geq{k}$ has the unique solution $n_k$ modulo $p^k$ satisfying the congruence $n_k\equiv n_1 \pmod{p}$.
\end{tw}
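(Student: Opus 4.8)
The plan is to read the theorem as a Hensel-type lifting statement in which $q_{n_k,p}$ plays the role of a derivative detecting when a solution lifts uniquely. The engine is the exact identity $a_{n+p}=u_n a_n+q_{n,p}$ with $u_n:=(n+p)^{n+2}n^{p-n-2}$, which holds by the very definition of $q_{n,p}$. Two elementary facts start the argument. First, $u_n\equiv n\pmod p$, because $(n+p)^{n+2}n^{p-n-2}\equiv n^{n+2}n^{p-n-2}=n^{p}\equiv n\pmod p$. Second, $v_p(q_{n,p})\geq 1$ whenever $p\mid a_n$: by Proposition \ref{prop2} we have $p\mid a_m$ for every $m\equiv n\pmod p$, so both $a_{n+p}$ and $a_n$ are divisible by $p$, hence so is $q_{n,p}$.

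The crux is the congruence
\[ q_{n+p,p}\equiv u_n\,q_{n,p}\pmod{p^2} \]
(equivalently $q_{n+p,p}\equiv n\,q_{n,p}\pmod{p^2}$, since $u_n\equiv n\pmod p$ and $v_p(q_{n,p})\geq1$). To establish it I would introduce the polynomials $b_m(x)=\sum_{j=0}^m\frac{m!}{j!}x^j$, so that $a_m=b_m(m)$, and use the recurrence $b_m(x)=m\,b_{m-1}(x)+x^m$. Iterating gives $b_{n+p}(x)=\frac{(n+p)!}{n!}b_n(x)+\sum_{i=1}^{p}\frac{(n+p)!}{(n+i)!}x^{n+i}$, while expanding $(n+p)^j$ yields $b_n(n+p)\equiv a_n+p(a_n-n^n)\pmod{p^2}$. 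Evaluating at $x=n+p$ then expresses $a_{n+p}\bmod p^2$ through $a_n$ and a tail sum, and comparing the resulting formula at $n$ and at $n+p$ produces the displayed congruence; after telescoping this is equivalent to the three-term relation $a_{n+2p}\equiv(u_n+u_{n+p})a_{n+p}-u_n^2a_n\pmod{p^2}$ along the progression $n,n+p,n+2p,\dots$. Verifying the tail to the correct $p$-adic precision (in particular handling the single index $n+i\equiv 0\pmod p$) is the main technical burden here.

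Granting the key congruence, the single-step trichotomy follows by iteration. Writing the target index as $n_k+tp^k$, the endpoint of $N=tp^{k-1}$ steps of size $p$ from $n_k$, one expands
\[ a_{n_k+tp^k}=\Big(\prod_{i=0}^{N-1}u_{n_k+ip}\Big)a_{n_k}+\sum_{i=0}^{N-1}\Big(\prod_{l=i+1}^{N-1}u_{n_k+lp}\Big)q_{n_k+ip,p}. \]
Since $v_p(a_{n_k})\geq k$, the leading product matters only modulo $p$ and contributes $n_k^{t}a_{n_k}$ (using $n_k^{p^{k-1}}\equiv n_k\pmod p$). The key congruence gives $q_{n_k+ip,p}\equiv n_k^{\,i}q_{n_k,p}\pmod{p^2}$, which for $k=1$ already collapses the sum to $t\,n_k^{\,t-1}q_{n_k,p}$, so that after dividing by $p$ the condition $p^{k+1}\mid a_{n_k+tp^k}$ becomes a single linear congruence in $t$ whose slope is a unit times $q_{n_k,p}/p\bmod p$. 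Hence a unique admissible $t$ exists precisely when $q_{n_k,p}\not\equiv 0\pmod{p^2}$, while if $q_{n_k,p}\equiv 0\pmod{p^2}$ the condition degenerates to $p^{k+1}\mid a_{n_k}$, giving the other two cases. I expect the main obstacle to lie in the range $k\geq 2$: there the sum has $p$-adic valuation $\geq k$, so the mod $p^2$ form of the key congruence no longer detects its leading coefficient, and one must either upgrade the precision of the summands (a stronger version of the key congruence) or re-derive the expansion in steps of $p^k$ through $b_{n+p^k}(x)$, whose $b_n$-coefficient has valuation $\frac{p^k-1}{p-1}\geq k+1$ and drops out; in either case I anticipate the same linear-in-$t$ structure persists.

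Finally, the ``Moreover'' statement is an induction on $k$ resting on the first bullet. The base case $k=1$ is immediate, since modulo $p$ the only class $\equiv n_1$ is $n_1$ itself and $p\mid a_{n_1}$. For the inductive step, any $n$ with $v_p(a_n)\geq k+1$ also satisfies $v_p(a_n)\geq k$, so $n\equiv n_k\pmod{p^k}$ by uniqueness at level $k$, and all candidates are $n_k+tp^k$. To invoke the first bullet I must certify $q_{n_k,p}\not\equiv 0\pmod{p^2}$, and this propagates from $q_{n_1,p}\not\equiv 0\pmod{p^2}$: writing $n_k=n_1+sp$ and iterating the key congruence yields $q_{n_k,p}\equiv\big(\prod_{j=0}^{s-1}u_{n_1+jp}\big)q_{n_1,p}\pmod{p^2}$, a unit multiple of $q_{n_1,p}$, whence $v_p(q_{n_k,p})=1$. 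The unique lift furnished by the first bullet is then the unique solution modulo $p^{k+1}$, closing the induction; applied with $p=5$, $n_1=2$ and $q_{2,5}\not\equiv 0\pmod{25}$, it proves Conjecture \ref{conj1}.
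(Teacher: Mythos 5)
Your approach --- reading $q_{n,p}$ as a discrete derivative via the identity $a_{n+p}=u_na_n+q_{n,p}$ with $u_n=(n+p)^{n+2}n^{p-n-2}$, and iterating along the progression $n_k,\,n_k+p,\,n_k+2p,\dots$ --- is genuinely different from the paper's, which reduces everything to the polynomial congruence $a_n\equiv n^{n-p^k+2}f_{p^k}(n)\pmod{p^k}$ with $f_{p^k}(X)=\sum_{j=0}^{p^k-1}X^{p^k-j-2}\prod_{i=0}^{j-1}(X-i)$, checks that $q_{n,p}\not\equiv 0\pmod{p^2}$ is equivalent to $f'_{p^2}(n)\not\equiv 0\pmod{p}$ and that $f'_{p^k}(n)\bmod p$ is independent of $k\ge 2$, and then invokes the classical theorem on lifting roots of a polynomial congruence (Theorem \ref{tw2}), for which the derivative modulo $p$ suffices uniformly in $k$. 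But your argument has a genuine gap exactly where you flag it: the single-step trichotomy for $k\ge 2$. In your expansion of $a_{n_k+tp^k}$, the sum $\sum_{i=0}^{N-1}\bigl(\prod_{l>i}u_{n_k+lp}\bigr)q_{n_k+ip,p}$ must be evaluated modulo $p^{k+1}$, yet your key congruence controls each summand only modulo $p^2$; a sum of terms each known mod $p^2$ is itself determined only mod $p^2$, so for $k\ge 2$ the computation cannot decide whether $p^{k+1}\mid a_{n_k+tp^k}$. You acknowledge this and merely ``anticipate'' that the linear-in-$t$ structure persists --- but that anticipation is the entire content of the theorem for $k\ge 2$, and the proposal supplies no mechanism (no higher-precision version of the key congruence, no completed $b_{n+p^k}$ expansion) to close it. This is precisely the difficulty the paper's passage to $f_{p^{k+1}}$ is designed to avoid.

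Two smaller issues. The key congruence $q_{n+p,p}\equiv u_nq_{n,p}\pmod{p^2}$ is in fact true (it follows from the paper's relation $q_{n,p}\equiv n^{n+2}\,p\,f'_{p^2}(n)\pmod{p^2}$ together with $f'_{p^2}(n+p)\equiv f'_{p^2}(n)\pmod{p}$ and $(n+p)^{n+p+2}\equiv n^{n+3}\pmod p$), but your proposed derivation via the recurrence for $b_m(x)$ is only a sketch, and you yourself defer ``the main technical burden'' of tracking the tail to the correct $p$-adic precision. Also, your appeal to Proposition \ref{prop2} to get $p\mid q_{n,p}$ covers only odd $p$, whereas the theorem (and the paper's application to $v_2(a_n)$) includes $p=2$. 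As written, the proposal establishes the theorem only for $k=1$.
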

The proof of this theorem is given in section 2.

The authors of \cite{1} stated another, more general conjecture concerning $p$-adic valuations of numbers $a_n$ when $p$ is an odd Schenker prime. The equivalent version of this conjecture is as follows:
\begin{hip}[Conjecture 4.12. in \cite{1}]\label{conj2}
Let $p$ be an odd Schenker prime. Then for every $k$ there exists the unique solution modulo $p^k$ of inequality $v_{p}(a_n)\geq k$ which is not congruent to $0$ modulo $p$.
\end{hip}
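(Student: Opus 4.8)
The natural route to Conjecture \ref{conj2} is to derive it from Theorem \ref{tw1}, which already packages the required induction on $k$. Fix an odd Schenker prime $p$. At the bottom level $k=1$, Proposition \ref{prop2} tells us that the residues $n\bmod p$ with $p\nmid n$ and $p\mid a_n$ are exactly the classes of those $r\in\{1,\dots,p-1\}$ for which $p\mid a_r$; so the $k=1$ uniqueness claim amounts to showing that there is precisely one such $r$. Granting this, write $n_1$ for that residue and feed it into the ``Moreover'' part of Theorem \ref{tw1}: provided $q_{n_1,p}\not\equiv 0\pmod{p^2}$, the inequality $v_p(a_n)\geq k$ has a unique solution modulo $p^k$ congruent to $n_1$ modulo $p$ for every $k$, which is exactly the assertion of the conjecture. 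Thus the plan is to reduce the whole statement to two arithmetic facts that must hold for \emph{every} odd Schenker prime $p$: first, that $p\mid a_r$ has a unique solution $r\in\{1,\dots,p-1\}$; and second, the non-degeneracy condition $q_{n_1,p}\not\equiv 0\pmod{p^2}$, which is what lets the Hensel-type lifting of Theorem \ref{tw1} produce a single class at each higher level rather than none or many.

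For $p=5$ both facts hold: one checks directly that $r=2$ is the only solution of $5\mid a_r$ in $\{1,2,3,4\}$, and that $q_{2,5}\not\equiv 0\pmod{25}$, so Conjecture \ref{conj1} drops out as the special case $p=5$ of this scheme. The hard part, and the step I expect to break, is the second fact in general: there is no structural reason forcing $q_{n_1,p}$ to avoid divisibility by $p^2$. If $q_{n_1,p}\equiv 0\pmod{p^2}$ ever occurs, then the first bullet of Theorem \ref{tw1} is unavailable, and instead either the second bullet forces \emph{all} $p$ lifts of $n_1$ to satisfy $p^2\mid a$ (so the level-$2$ solution is not unique) or the third bullet forces \emph{none} of them to do so (so there is no level-$2$ solution coprime to $p$). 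Either way the uniqueness asserted by the conjecture collapses, so the entire argument hinges on ruling out $q_{n_1,p}\equiv 0\pmod{p^2}$ uniformly in $p$.

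The way to settle the matter is therefore to analyse $q_{n_1,p}\bmod p^2$ explicitly, expanding $a_{n_1+p}$ and $a_{n_1}$ from the definition of the Schenker sum and reducing modulo $p^2$. Carrying this computation out for the Schenker prime $p=37$ is exactly where the obstruction surfaces: one finds $q_{n_1,37}\equiv 0\pmod{37^2}$ together with $37^2\nmid a_{n_1}$, which places us in the third bullet of Theorem \ref{tw1} and pins $v_{37}(a_n)\leq 1$ for every $n$ with $37\nmid n$. Consequently the inequality $v_{37}(a_n)\geq 2$ has \emph{no} solution coprime to $37$, and the proposed proof cannot be completed for all odd Schenker primes. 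The honest conclusion, consistent with the $p=37$ computation, is that Conjecture \ref{conj2} is \emph{false} in general even though it holds for $p=5$; the non-vanishing of $q_{n_1,p}\bmod p^2$, which I flagged as the main obstacle, is precisely the hypothesis that fails.
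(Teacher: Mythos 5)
Your proposal is correct and takes essentially the same route as the paper: the paper also refutes Conjecture \ref{conj2} by taking $p=37$ with $n_1=25$ (the unique residue with $37\mid a_n$, $37\nmid n$), checking $q_{25,37}=a_{62}-a_{25}\cdot 62^{27}\cdot 25^{10}\equiv 0 \pmod{37^2}$ while $a_{25}\equiv 23\cdot 37 \pmod{37^2}$, and invoking the third bullet of Theorem \ref{tw1} to conclude $v_{37}(a_n)\leq 1$ for all $n$ with $37\nmid n$, so that $v_{37}(a_n)\geq 2$ has no solution coprime to $37$. Your diagnosis that the non-vanishing of $q_{n_1,p}$ modulo $p^2$ is the decisive hypothesis, and that its failure at $p=37$ kills the conjecture, is precisely the paper's argument.
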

Using results of Theorem \ref{tw1} we will show that Conjecture \ref{conj2} is not satisfied by all odd Schenker primes.

\begin{Convention} We assume that the expression $x \equiv y \pmod{p^k}$ means $v_p(x-y) \geq k$ for prime number $p$ and the integer $k$. The following convention extends relation of equivalence modulo $p^k$ to all rational numbers $x, y$ and integers $k$. Moreover, we set a convention that $\prod_{i=0}^{-1} = 1$.
\end{Convention}

\section{Proof of the main theorem}

Theorem \ref{tw1} recalls a well known fact concerning $p$-adic valuation of a value of polynomial with integer coefficients (see \cite{4}, page 44):

\begin{tw}\label{tw2}
Let $f$ be a polynomial with integer coefficients, $p$ be a prime number and $k$ be a positive integer. Assume that $f(n_0)\equiv 0 \pmod{p^k}$ for some integer $n_0$. Then number of solutions $n$ of the congruence $f(n)\equiv 0 \pmod{p^{k+1}}$, satisfying the condition $n \equiv n_0 \pmod{p^k}$, is equal to:
\begin{itemize}
\item $1$, when $f'(n_0)\not\equiv 0 \pmod{p}$;
\item $0$, when $f'(n_0)\equiv 0 \pmod{p}$ and $f(n_0)\not\equiv 0 \pmod{p^{k+1}}$;
\item $p$, when $f'(n_0)\equiv 0 \pmod{p}$ and $f(n_0)\equiv 0 \pmod{p^{k+1}}$.
\end{itemize}
\end{tw}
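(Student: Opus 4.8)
The plan is to reduce the congruence $f(n)\equiv 0 \pmod{p^{k+1}}$, under the side condition $n\equiv n_0 \pmod{p^k}$, to a single linear congruence in one variable modulo $p$, and then read off the three cases from the elementary theory of linear congruences. First I would parametrize the relevant residues: every integer $n$ with $n\equiv n_0 \pmod{p^k}$ has the form $n=n_0+tp^k$ with $t\in\mathbb{Z}$, and two such integers are congruent modulo $p^{k+1}$ precisely when the corresponding values of $t$ are congruent modulo $p$. Hence counting solutions $n$ modulo $p^{k+1}$ satisfying $n\equiv n_0 \pmod{p^k}$ is the same as counting residues $t$ modulo $p$ for which $f(n_0+tp^k)\equiv 0 \pmod{p^{k+1}}$.

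The key algebraic input is a first-order Taylor expansion with integer remainder: for any $f$ with integer coefficients one has $f(X+H)-f(X)-f'(X)H\in H^2\mathbb{Z}[X,H]$, which follows by expanding each monomial $(X+H)^m$ and noting that the binomial coefficients $\binom{m}{i}$ for $i\geq 2$ are integers. Substituting $X=n_0$ and $H=tp^k$ gives $f(n_0+tp^k)=f(n_0)+f'(n_0)tp^k+p^{2k}R(t)$ with $R(t)\in\mathbb{Z}$. Since $k\geq 1$, we have $2k\geq k+1$, so the remainder term is divisible by $p^{k+1}$ and
\[
f(n_0+tp^k)\equiv f(n_0)+f'(n_0)tp^k \pmod{p^{k+1}}.
\]

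Next I would use the hypothesis $f(n_0)\equiv 0 \pmod{p^k}$ to write $f(n_0)=p^k A$ with $A\in\mathbb{Z}$; dividing the displayed congruence by $p^k$ converts the condition $f(n_0+tp^k)\equiv 0 \pmod{p^{k+1}}$ into the linear congruence $f'(n_0)t\equiv -A \pmod{p}$. The trichotomy then follows at once. If $f'(n_0)\not\equiv 0 \pmod{p}$, the congruence has exactly one solution $t$ modulo $p$, hence one solution $n$. If $f'(n_0)\equiv 0 \pmod{p}$, the congruence degenerates to $A\equiv 0 \pmod{p}$, a condition independent of $t$; this holds precisely when $f(n_0)\equiv 0 \pmod{p^{k+1}}$, in which case every residue $t$ is a solution (giving $p$ of them), and otherwise there is none.

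The argument is essentially routine, and I expect the only point requiring care to be the integrality of the remainder term, i.e. that the higher Taylor coefficients of $f$ do not introduce denominators that could disturb the $p$-adic bookkeeping. This is exactly what the binomial-coefficient observation guarantees, after which the inequality $2k\geq k+1$ does all the remaining work, so I anticipate no genuine obstacle beyond stating this remainder estimate cleanly.
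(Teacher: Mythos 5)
Your proof is correct and complete. Note that the paper itself does not prove this statement at all: it quotes it as a well-known fact with a reference to Narkiewicz's textbook (\cite{4}, page 44), so there is no in-paper argument to compare against. What you have written is the standard proof that such a citation points to -- parametrize the lifts as $n_0+tp^k$, use the integral Taylor expansion $f(n_0+tp^k)\equiv f(n_0)+f'(n_0)tp^k \pmod{p^{k+1}}$ (valid since $2k\geq k+1$), divide through by $p^k$, and count solutions of the resulting linear congruence in $t$ modulo $p$ -- and all three cases of the trichotomy are handled correctly, including the integrality of the Taylor remainder, which you rightly identify as the only point needing care.
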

Similarity of these theorems is not incidential. Namely, we will show that checking $p$-adic valuation of values of some polynomials is sufficient for computation of the $p$-adic valuation of the Schenker sum. Firstly, note that for any positive integers $d, n$ which are coprime the divisibility of $a_n$ by $d$ is equivalent to the divisibility of $a_{n \mbox{\scriptsize{ mod }}p}$ by $d$:
\begin{equation}\label{eq1}
\begin{split}
a_n &= \sum_{j=0}^{n}\frac{n!}{j!}n^j = \sum_{j=0}^{n}n^{n-j}\prod_{i=0}^{j-1}(n-i) \equiv \\
 &\equiv  \sum_{j=0}^{d-1}n^{n-j}\prod_{i=0}^{j-1}(n-i) =  n^{n-d+2}\sum_{j=0}^{d-1}n^{d-j-2}\prod_{i=0}^{j-1}(n-i) \pmod{d},
\end{split}
\end{equation}
where the equivalnce between the third and the fourth expression follows from the fact that the product of at least $d$ consecutive integers contains an integer $d$ divisible by $d$, hence it is equal to 0 mod $d$. Thus for every $d\in\mathbb{N}_+$ we define the polynomial:
\begin{equation*}
f_d(X):=\sum_{j=0}^{d-1}X^{d-j-2}\prod_{i=0}^{j-1}(X-i).
\end{equation*}
With this notation the formula (\ref{eq1}) can be rewritten in the following way:
\begin{equation}\label{eq2}
a_n \equiv n^{n-d+2}f_d(n) \pmod{d}.
\end{equation}
Let $r=n\pmod{d}$. If $d, n$ are coprime, then the following sequence of equivalences is true:
\begin{equation*}\label{eq6}\begin{split}
& a_n\equiv n^{n-d+2}f_d(n)\equiv 0 \pmod{d} \iff\quad f_d(n)\equiv 0\pmod{d} \\
\iff\quad & f_d(r)\equiv 0\pmod{d} \iff\quad a_r\equiv r^{r-d+2}f_d(r)\equiv 0 \pmod{d}. \\ 
\end{split}\end{equation*}
If $d=p^k$ for some prime number $p$ and positive intiger $k$, then the formula (\ref{eq2}) takes the form:
\begin{equation}\label{eq3}
a_n \equiv n^{n-p^k+2}f_{p^k}(n) \pmod{p^k}.
\end{equation}
We thus see that if $p\nmid n$, then $v_p(a_n)\geq k$ if and only if $v_p(f_{p^k}(n))\geq k$. Moreover, for any $k_1, k_2 \in\mathbb{N}$ satisfying $k_1 \leq k_2$ the following congruence holds:
\begin{equation*}
n^{n-p^{k_1}+2}f_{p^{k_1}}(n) \equiv n^{n-p^{k_2}+2}f_{p^{k_2}}(n) \pmod{p^{k_1}}.
\end{equation*}
Hence, if $p\nmid n$ and $k_1 \leq k_2$, then: 
\begin{equation*}
p^{k_1} \mid f_{p^{k_2}}(n) \iff p^{k_1} \mid f_{p^{k_1}}(n).
\end{equation*}
 If we assume now that $k>1$, then by Fermat's little theorem (in the form \linebreak$n^{p^k} \equiv n \pmod{p}$) and the fact that product of at least $p$ consecutive integers is divisible by $p$ we obtain:

\begin{equation*}\begin{split}
f'_{p^k}(n) &= \sum_{j=0}^{p^k-1}\left[(p^k-j-2)n^{p^k-j-3}\prod_{i=0}^{j-1}(n-i)+n^{p^k-j-2}\sum_{h=0}^{j-1}\prod_{i=0, i\neq h}^{j-1}(n-i)\right] \equiv \\
& \equiv \sum_{j=0}^{2p-1}\left[(-j-2)n^{-j-2}\prod_{i=0}^{j-1}(n-i)+n^{-j-1}\sum_{h=0}^{j-1}\prod_{i=0, i\neq h}^{j-1}(n-i)\right] \pmod{p}.
\end{split}\end{equation*}
The formula above implies the congruence:
\begin{equation}\label{eq4}
f'_{p^{k_1}}(n) \equiv f'_{p^{k_2}}(n) \pmod{p},
\end{equation}
for $k_1, k_2 > 1$ and $p\nmid n$. Let us recall that if $f\in\mathbb{Z}[X]$, then for any $x_0\in\mathbb{Z}$ there exists such a $g\in\mathbb{Z}[X]$ that

\begin{equation*}
f(X) = f(x_0) + (X-x_0)f'(x_0) + (X-x_0)^2g(X).
\end{equation*}
Using the formula (\ref{eq3}) and the equality above for $f=f_{p^2}$, $x_0=n$ and $X=n+p$, we have:

\begin{equation}\label{eq5}
\frac{a_{n+p}}{{(n+p)}^{n+p-p^2+2}}-\frac{a_n}{n^{n-p^2+2}} \equiv f_{p^2}(n+p)-f_{p^2}(n) \equiv pf'_{p^2}(n) \pmod{p^2}.
\end{equation}
If $p\nmid x$, then by Euler's theorem $x^{p^2-p} \equiv 1 \pmod{p^2}$ we can simplify the congruence (\ref{eq5}) and get:

\begin{equation*}
\frac{a_{n+p}}{{(n+p)}^{n+2}}-\frac{a_n}{n^{n-p+2}} \equiv pf'_{p^2}(n) \pmod{p^2}
\end{equation*}
and this leads to the congruence

\begin{equation*}
\frac{1}{p}\left(\frac{a_{n+p}}{{(n+p)}^{n+2}}-\frac{a_n}{n^{n-p+2}}\right) \equiv f'_{p^2}(n) \pmod{p}.
\end{equation*}
Our consideration shows that the following conditions are equivalent:

\begin{equation}\label{eq6}\begin{split}
 &f'_{p^2}(n)\not\equiv 0 \pmod{p} \\
\iff\quad & \frac{1}{p}\left(\frac{a_{n+p}}{{(n+p)}^{n+2}}-\frac{a_n}{n^{n-p+2}}\right)\not\equiv 0\pmod{p} \\ 
\iff\quad &  \frac{a_{n+p}}{{(n+p)}^{n+2}}-\frac{a_n}{n^{n-p+2}}\not\equiv 0\pmod{p^2}  \\ 
\iff\quad & a_{n+p}-\frac{a_n(n+p)^{n+2}}{n^{n-p+2}} \not\equiv 0 \pmod{p^2} \\ 
\iff\quad & a_{n+p}-a_n(n+p)^{n+2}n^{p-n-2} \not\equiv 0 \pmod{p^2}.
\end{split}\end{equation}

Assume now that $p^k\mid a_{n_k}$ for some $n_k\in\mathbb{N}$ not divisible by $p$ and
\begin{equation*}
a_{n_k+p}-a_{n_k}(n_k+p)^{n_k+2}n_k^{p-n_k-2} \not\equiv 0 \pmod{p^2}.
\end{equation*}
Then $p\nmid f'_{p^2}(n_k)$ and by (\ref{eq5}) $p\nmid f'_{p^{k+1}}(n_k)$. Using now the Theorem \ref{tw2} for $f=f_{p^{k+1}}$ we conclude that there exists the unique $n_{k+1}\in\mathbb{Z}$ modulo $p^{k+1}$ satisfying conditions $p^{k+1}\mid a_{n_{k+1}}$ and $n_{k+1} \equiv n_k \pmod{p^k}$. 

By simple induction on $k$ we obtain that if $p\nmid n_1$ and $p \mid a_{n_1}$ together with the condition 
\begin{equation*}
a_{n_1+p}-a_{n_1}(n_1+p)^{n_1+2}n_1^{p-n_1-2} \not\equiv 0 \pmod{p^2},
\end{equation*}
then there exists the unique $n_k$ modulo $p^k$ such that $p^k \mid a_{n_k}$, $n_k \equiv n_1 \pmod{p}$ and
\begin{equation*}
\frac{1}{p}\left(\frac{a_{n_k+p}}{{(n_k+p)}^{n_k+2}}-\frac{a_{n_k}}{n_k^{n_k-p+2}}\right) \equiv
\frac{1}{p}\left(\frac{a_{n_1+p}}{{(n_1+p)}^{n_1+2}}-\frac{a_{n_1}}{n_1^{n_1-p+2}}\right) \pmod{p}.
\end{equation*}
Certainly this statement is true for $k=1$. Now, if we assume that there exists the unique $n_k$ modulo $p^k$ satisfying the conditions in the statement, then there exists the unique $n_{k+1}$ modulo $p^{k+1}$ such that $p^{k+1} \mid a_{n_{k+1}}$, $n_{k+1} \equiv n_k \pmod{p^k}$. Using (\ref{eq6}) we conclude that
\begin{equation*}
\begin{split}
& \frac{1}{p}\left(\frac{a_{n_{k+1}+p}}{{(n_{k+1}+p)}^{n_{k+1}+2}}-\frac{a_{n_{k+1}}}{n_{k+1}^{n_{k+1}-p+2}}\right) 
 \equiv f'_{p^{k+1}}(n_{k+1}) \equiv f'_{p^{k+1}}(n_k) \equiv \\
& \equiv \frac{1}{p}\left(\frac{a_{n_k+p}}{{(n_k+p)}^{n_k+2}}-\frac{a_{n_k}}{n_k^{n_k-p+2}}\right) \equiv  \frac{1}{p}\left(\frac{a_{n_1+p}}{{(n_1+p)}^{n_1+2}}-\frac{a_{n_1}}{n_1^{n_1-p+2}}\right) \pmod{p}.
\end{split}
\end{equation*}
Summing up our discussion we see that the first case in the statement of \linebreak Theorem \ref{tw1} is proved. We prove the rest of the statement now.

Let $p\nmid n_k$ and $p^k\mid a_{n_k}$ and 
\begin{equation*}a_{n_k+p}-a_{n_k}(n_k+p)^{n_k+2}n_k^{p-n_k-2} \equiv 0 \pmod{p^2}.\end{equation*}
Since the last of the conditions above is equivalent to divisibility of $f'_{p^k}(n_k)$ by $p$, the Theorem \ref{tw2} allows us to conclude that:
\begin{itemize}
\item if $p^{k+1} \mid a_{n_k}$, then $p^{k+1} \mid a_n$ for any $n \equiv n_k \pmod{p^k}$;
\item if $p^{k+1} \nmid a_{n_k}$, then $p^{k+1} \nmid a_n$ for any $n \equiv n_k \pmod{p^k}$.
\end{itemize}

We have obtained an useful criterion for behaviour of $p$-adic valuation for numbers $a_n$. In particular, the condition:
\begin{equation*}
a_{n_1+p}-a_{n_1}(n_1+p)^{n_1+2}n_1^{p-n_1-2}\not\equiv 0 \pmod{p^2}
\end{equation*}
is not only sufficient, but also necessary condition for existence of the unique solution modulo $p^k$ of inequality $v_p(a_n) \geq k$ such that $n \equiv n_1 \pmod{p}$.

\section{Solution of conjectures}
First of all let us note that Theorem \ref{tw1} provides the formula $v_2(a_n) = 1$ for every odd positive integer $n$. Indeed:
\begin{equation*}
q_{1,2} = a_3 - a_1\cdot 3^{1+2}\cdot 1^{2-1-2} = 78 - 2\cdot 27 = 24 \equiv 0 \pmod{4}
\end{equation*}
and $a_1 = 2$ and thus $2\nmid n$, then $v_2(a_n) = 1$. This gives an alternative proof of Amdeberhan's, Callan's and Moll's result.

Theorem \ref{tw1} allows us to prove that the Conjecture \ref{conj1} is true by verifying the condition $a_7-a_2\cdot 7^{2+2}\cdot 2^{5-2-2} \not\equiv 0 \pmod{5^2}$. It is easy to check that
\begin{equation*}
a_7-a_2\cdot 7^{2+2}\cdot 2^{5-2-2} = 3309110-10\cdot 2401\cdot 2 = 3261090 \equiv 15 \not\equiv 0 \pmod{25}
\end{equation*}
and the proof of the Conjecture \ref{conj1} is finished.

Let us take the next Schenker prime $p=13$. If $13\nmid n$, then $13\mid a_n$ if and only if $n \equiv 3 \pmod{13}$. Using Theorem \ref{tw1} for $p=13$ and $n_1=3$:
\begin{equation*}\begin{split}a_{16}-a_3\cdot 16^{3+2}\cdot 3^{13-3-2} &= 105224992014096760832-78\cdot 1048576\cdot 6561 =\\
&= 117-78\cdot 100\cdot 139 = -1084083 \equiv 52 \pmod{169},
\end{split}\end{equation*}
we conclude that for every positive natural $k$ there exists the unique solution modulo $13^k$ of inequality $v_{13}(a_n)\geq k$ which is not divisible by $p$ and we know that it is congruent to 3 modulo 13. This implies that if $p=13$, then the Conjecture \ref{conj1} is true.

The Conjecture \ref{conj2} states that for every odd Schenker prime $p$ there exists the unique $n_1 \in\mathbb{N}_+$ less than $p$ such that $p\mid a_{n_1}$ and for this $n_1$ we have:
\begin{equation*}a_{n_1+p}-a_{n_1}(n_1+p)^{n_1+2}n_1^{p-n_1-2}\not\equiv 0 \pmod{p^2}.\end{equation*}
However, it is easy to see that the Conjecture \ref{conj2} is not true in general. Indeed, let us put $p=37$. If $37\nmid n$, then $37\mid a_n$ if and only if $n \equiv 25 \pmod{37}$. However, $37^2\mid a_{62}-a_{25}\cdot 62^{27}\cdot 25^{10}$. Moreover, $a_{25}\equiv 851 = 23\cdot 37 \pmod{37^2}$, thus $v_{37}(a_n)=1$ for any $n \equiv 25 \pmod{37}$. Hence 37-adic valuation of Schenker sums is bounded by one on the set of positive natural numbers not divisible by 37. We can describe it by a simple formula:
\begin{equation*}
v_{37}(a_n) = \begin{cases}
\frac{n-s_{37}(n)}{36}, & \mbox{ when } n\equiv 0 \pmod{37}
\\1, & \mbox{ when } n \equiv 25 \pmod{37}
\\0, & \mbox{ when } n \not\equiv 0, 25 \pmod{37}
\end{cases}.\end{equation*}
Our result shows that the Conjecture \ref{conj2} is false for $p=37$.
Moreover, there exist prime numbers $p$ for which number of solutions modulo $p$ of congruence $a_n \equiv 0 \pmod{p}$, where $p\nmid n$, is greater than 1. Denote this number by $\lambda(p)$ (note that a prime number $p$ is a Schenker prime if and only if $\lambda(p)>0$). According to computations in Mathematica [7], we know that there are 126 Schenker primes among 200 first prime numbers. In the table below we present the solutions of the equation $\lambda(p)=k$ for $k\leq 5$.


\begin{center}
\begin{tabular}{|c|p{10.5 cm}|} \hline
$\lambda(p)$ & $p$ \\ \hline
1 & 5, 13, 23, 31, 37, 43, 47, 53, 59, 61, 71, 79, 101, 103, 107, 109, 127, 137, 157, 163, 173, 229, 241, 251, 257, 263, 317, 337, 349, 353, 359, 397, 421, 431, 487, 499, 503, 521, 547, 571, 577, 587, 617, 619, 641, 653, 661, 727, 733, 757, 797, 811, 821, 829, 881, 883, 937, 947, 967, 977, 991, 1013, 1031, 1039, 1091, 1097, 1123, 1163, 1181, 1213 \\
2 & 41, 149, 181, 191, 199, 211, 271, 283, 293, 311, 367, 383, 401, 409, 419, 439, 461, 523, 541, 563, 569, 607, 613, 647, 673, 691, 709, 761, 787, 827, 929, 941, 983, 997, 1021, 1051, 1061, 1087, 1117, 1151, 1153, 1223 \\
3 & 179, 197, 223, 277, 509, 601, 683, 743, 887, 1201 \\
4 & --- \\
5 & 593 \\ \hline
\end{tabular}
\begin{center}Table 1\end{center}
\end{center}


\section{Questions}
Although Conjecture \ref{conj2} is not true, we do not know if 2 and 37 are the only primes $p$ such that $p\mid a_n$ and $q_{n,p}=a_{n+p}-a_n(n+p)^{n+2}n^{p-n-2}\equiv 0\pmod{p^2}$ for some $n\in\mathbb{N}_+$ which is not divisible by $p$. According to a numerical computations, they are unique among all primes less than 30000. The results above suggest to formulate the following questions:
\begin{pyt}
Is there any Schenker prime greater than $37$ for which there exists $n\in\mathbb{N}_+$ such that $p\nmid n$, $p\mid a_n$ and $q_{n,p}\equiv 0\pmod{p^2}$?
\end{pyt}
\begin{pyt}
Are there infinitely many Schenker primes $p$ for which there exisits $n\in\mathbb{N}_+$ such that $p\nmid n$, $p\mid a_n$ and $q_{n,p}\equiv 0\pmod{p^2}$?
\end{pyt}

In the light of the results presented in the table some natural questions arise:
\begin{pyt}
Are there infinitely many Schenker primes $p$ for which $\lambda(p)>1$?
\end{pyt}
\begin{pyt}
Let $m$ be a positive integer. Is there any Schenker prime $p$ such that $\lambda(p)\geq m$?
\end{pyt}

In section 1 of this paper we presented a short proof of the infinitude of set of Schenker primes. In view of this fact it is natural to ask:
\begin{pyt}
Are there infinitely many primes which are not Schenker primes?
\end{pyt}

\section*{Acknowledgements}

I wish to thank my MSc thesis advisor Maciej Ulas for many valuable remarks concerning presentation of results. I would like also to thank Maciej Gawron for help with computations and Tomasz Pełka for help with edition of the paper.

\end{document}